\newcommand{\ol }{\overline }
\newcommand{\abs}[1]{\left\lvert #1 \right\rvert}
\newcommand{\ZZ }{\mathbb{Z}}
\newcommand{\QQ }{\mathbb{Q}}
\newcommand{\RR }{\mathbb{R}}
\newcommand{\eps }{\varepsilon }
\newcommand{\grond }{\mathcal{G}}
\newcommand{\injto }{\hookrightarrow }
\declaretheoremstyle[
headfont=\sffamily\bfseries\color{MidnightBlue},
mdframed={style=mdbluebox},
headpunct={\\[3pt]},
postheadspace={0pt}
]{thmbluebox}
\declaretheoremstyle[
headfont=\bfseries\color{RawSienna},
mdframed={style=mdredbox},
headpunct={\\[3pt]},
postheadspace={0pt},
]{thmredbox}
\declaretheoremstyle[
headfont=\bfseries\sffamily\color{ForestGreen!70!black},
bodyfont=\normalfont,
spaceabove=2pt,
spacebelow=1pt,
mdframed={style=mdgreenbox},
headpunct={ --- },
]{thmgreenbox}
\declaretheoremstyle[
headfont=\bfseries,
bodyfont=\normalfont\small,
spaceabove=0pt,
spacebelow=0pt,
mdframed={style=mdblackbox}
]{thmblackbox}
\theoremstyle{definition}
\declaretheorem[name=Theorem,numberwithin=section]{theorem}
\declaretheorem[name=Proposition,sibling=theorem]{proposition}
\declaretheorem[name=Theorem,numbered=no]{theorem*}
\declaretheorem[name=Lemma,numbered=no]{lemma*}
\declaretheorem[name=Proposition,numbered=no]{proposition*}
\declaretheorem[name=Corollary,numbered=no]{corollary*}
\declaretheorem[name=Definition,numbered=no]{definition*}
\declaretheorem[style=thmgreenbox,name=Algorithm,numbered=no]{algorithm*}
\declaretheorem[style=thmgreenbox,name=Claim,numbered=no]{claim*}
\declaretheorem[style=thmgreenbox,name=Case,numbered=no]{case*}
\declaretheorem[style=thmredbox,name=Example,numbered=no]{example*}
\declaretheorem[style=thmredbox,name=Examples,numbered=no]{examples*}
\declaretheorem[style=thmblackbox,name=Remark,numbered=no]{remark*}
\declaretheorem[style=thmblackbox,name=Notation,numbered=no]{notation*}
\declaretheorem[name=Conjecture,numbered=no]{conjecture*}
\declaretheorem[name=Exercise,numbered=no]{exercise*}
\declaretheorem[name=Fact,numbered=no]{fact*}
\declaretheorem[name=Problem,numbered=no]{problem*}
\declaretheorem[name=Question,numbered=no]{question*}
\declaretheorem[name=Abuse of notation,numbered=no]{abuse*}
\begin{document}
\title{A Generalisation of Euler's Totient Function}
\author{Vlad Robu}
\address{Faculty of Mathematics and Informatics\\
	University of Bucharest,
	Academiei st. 14,
	Bucharest, Romania}
	\email{vlad.robu@s.unibuc.ro}
	\keywords{ Euler's totient function, Mertens' third theorem, irreducible integer polynomial, asymptotic lower bound, Frobenius density theorem, natural/Dirichlet density}
	\subjclass[2020]{11C08, 11N37}
\date{\today}
\maketitle

\begin{abstract}
 Euler's totient function, $\varphi(n)$, which counts how many of $0,1,\dots,n-1$ are coprime to $n$, has an explicit asymptotic lower bound of $n/\log \log n$, modulo some constant. In this note, we generalise $\varphi$; given an irreducible integer polynomial $P$, we define the arithmetic function $\varphi_P(n)$ that counts the amount of numbers among $P(0),P(1),\dots,P(n-1)$ that are coprime to $n$. We also provide an asymptotic lower bound for $\varphi_P(n)$. 





\end{abstract}

\section{Introduction and Preliminary Remarks}

Throughout the entire text, $p$ is intended to be a prime number. For functions $f,g:\RR\to \RR_{>0}$, we say that $f\sim g$ if $\displaystyle \lim_{x\to \infty}f(x)/g(x)=1$.

We start by recalling the following result, which is due to Mertens: 

\begin{theorem}
[Mertens' third theorem]
We have that $$ \prod_{\substack{p\le x}}\left(1-\frac{1}{p}\right)\sim  \frac{e^{-\gamma}}{\log x},$$ 
where $\gamma$ is the Euler-Mascheroni constant, $\gamma=\displaystyle \lim_{n\to\infty}\left(H_n-\log n\right)$.
\end{theorem}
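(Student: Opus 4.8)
The plan is to turn the product into a sum via logarithms and then isolate its divergent part. Writing
\[
\log \prod_{p \le x}\left(1-\frac{1}{p}\right)
= \sum_{p \le x}\log\left(1-\frac{1}{p}\right)
= -\sum_{p \le x}\frac{1}{p} \;-\; \sum_{p \le x}\sum_{k \ge 2}\frac{1}{k\,p^{k}},
\]
I would first observe that the double sum is absolutely convergent as $x \to \infty$, being dominated by $\sum_p \frac{1}{p(p-1)}$; hence it equals a constant $C := \sum_p \sum_{k \ge 2}\frac{1}{k p^{k}}$ up to an $o(1)$ error. Everything then reduces to the asymptotics of $\sum_{p \le x} 1/p$, and the entire difficulty is compressed into identifying the additive constant so that it combines with $C$ to give exactly $\gamma$.

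To handle $\sum_{p \le x}1/p$ I would prove Mertens' second theorem, $\sum_{p \le x}\frac{1}{p} = \log\log x + M + o(1)$. The usual route passes through Mertens' first theorem $\sum_{p \le x}\frac{\log p}{p} = \log x + O(1)$: evaluating $\sum_{n \le x}\log n = \log\left(\floor{x}!\right)$ by Stirling gives $x\log x - x + O(\log x)$, while the identity $\log n = \sum_{d \mid n}\Lambda(d)$ and interchanging summation gives $x\sum_{d \le x}\frac{\Lambda(d)}{d} + O(\psi(x))$; the Chebyshev bound $\psi(x) = O(x)$ isolates $\sum_{d \le x}\frac{\Lambda(d)}{d} = \log x + O(1)$, and dropping the negligible higher prime-power terms yields the first theorem. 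Abel summation against the weight $1/\log t$ then upgrades this to the second theorem, leaving $M$ a well-defined but so-far anonymous constant.

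The genuine obstacle is pinning down the constant, i.e.\ showing $M + C = \gamma$, and I expect this to be where all the care is needed. I would compare two expansions of the prime Dirichlet series $\sum_p p^{-s}$ as $s \to 1^{+}$. On one hand, from the Euler product,
\[
\log\zeta(s) = -\sum_p \log\left(1-p^{-s}\right) = \sum_p \frac{1}{p^{s}} + \sum_p\sum_{k\ge 2}\frac{1}{k p^{ks}},
\]
where the second sum tends to $C$ by monotone convergence, and the Laurent expansion $\zeta(s) = \frac{1}{s-1} + \gamma + O(s-1)$ gives $\log\zeta(s) = -\log(s-1) + o(1)$; hence $\sum_p p^{-s} = -\log(s-1) - C + o(1)$. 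On the other hand, feeding $\sum_{p \le x}1/p = \log\log x + M + o(1)$ into Abel summation (with $\sigma = s-1 \to 0^{+}$ and the substitution $u = \sigma\log x$) produces $\sum_p p^{-s} = -\log(s-1) + M - \gamma + o(1)$, the extra $-\gamma$ arising precisely from $\int_0^{\infty} e^{-u}\log u\,du = -\gamma$. Equating the two expansions gives $M - \gamma = -C$, i.e.\ $M + C = \gamma$. Substituting back, $\log\prod_{p \le x}(1-1/p) = -\log\log x - \gamma + o(1)$, which exponentiates to the claimed equivalence. The delicate step throughout is the transfer between the partial sum over $p \le x$ and the Dirichlet series near $s=1$, which must be justified by an Abelian argument in order to carry the constant correctly.
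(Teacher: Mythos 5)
The paper does not actually prove Mertens' third theorem: it is recalled as a classical result with references, and everything else is built on top of it. So there is no internal proof to compare against; your proposal stands or falls on its own, and it stands. It is the standard classical argument: expand the logarithm of the product, absorb the $k\ge 2$ prime-power terms into a convergent constant $C$, reduce to Mertens' second theorem $\sum_{p\le x}1/p=\log\log x+M+o(1)$ (itself obtained from the first theorem via Stirling, the identity $\log n=\sum_{d\mid n}\Lambda(d)$, Chebyshev's bound, and Abel summation), and then identify $M+C=\gamma$ by comparing two expansions of $\sum_p p^{-s}$ as $s\to 1^+$. Your computation of the Abelian side is correct: with $\sigma=s-1$ and $u=\sigma\log x$, the main term produces $-\log\sigma+\int_0^\infty e^{-u}\log u\,du=-\log\sigma-\gamma$, and the $o(1)$ error in Mertens' second theorem is harmless because its contribution splits into a range $[2,X]$ that is killed by the factor $\sigma$ and a tail bounded by the supremum of the error beyond $X$. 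Two remarks. First, you quote the Laurent expansion $\zeta(s)=\frac{1}{s-1}+\gamma+O(s-1)$, but your argument only uses $(s-1)\zeta(s)\to 1$; this is worth making explicit, since it shows there is no circularity---the $\gamma$ in the final answer enters exclusively through the integral $\int_0^\infty e^{-u}\log u\,du=-\gamma$, not through the constant term of $\zeta$. Second, that integral identity (with $\gamma$ defined as $\lim_{n\to\infty}(H_n-\log n)$, which is the definition the paper uses) and Chebyshev's bound $\psi(x)=O(x)$ are the two external inputs your argument leans on; both are standard, but in a fully self-contained write-up each would need its own proof (the former, e.g., via $\int_0^\infty e^{-u}\log u\,du=\Gamma'(1)$ together with the product formula for $\Gamma$, or by an elementary passage to the limit in $\int_0^n\left(1-u/n\right)^n\log u\,du$).
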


Based on this, a quick \textit{lower bound} for $\varphi(n)$ can be established asymptotically, where $\varphi$ indicates the Euler totient function. Indeed, we know that \cite{landau}
$$\frac{\varphi(n)}{n}=\prod_{p\mid n}\left(1-\frac{1}{p}\right)=\prod_{\substack{p\mid n\\p\le \log n}}\left(1-\frac{1}{p}\right)\prod_{\substack{p\mid n\\p>\log n}}\left(1-\frac{1}{p}\right).$$
The second product cannot have more than $\log n/\log \log n$ factors. Thus it is bounded below by $\displaystyle \left(1-\frac{1}{\log n}\right)^{\log n/\log \log n}$, which goes to $1$ (from below) as $n$ goes to infinity. The first product is bounded below by $\displaystyle \prod_{p\le \log n}\left(1-\frac{1}{p}\right)\sim \frac{e^{-\gamma}}{\log \log n}$. Overall, we obtain that $\displaystyle \varphi(n)$ is bounded below by a quantity asymptotically similar to $\displaystyle e^{-\gamma}\cdot\frac{n}{\log \log n}$. In this note, we are going to generalise $\varphi$ and also provide a lower bound for the generalised version; however, by contrast to the case of the usual $\varphi$, the constants that appear in front will not be explicit. 

Mertens' third theorem can be rapidly generalised. Let us take a look at the asymptotics of the quantity $\displaystyle \prod_{d<p\le x}\left(1-\frac{d}{p}\right)$ in terms of $x$, where $d$ is a fixed positive integer. The whole trick is that it can be connected to the product in Mertens' third theorem via the \textit{convergent} product \cite{generalise.mertens}
$$\grond_d=\prod_{p>d}\left(1-\frac{d}{p}\right)\left(1-\frac{1}{p}\right)^{-d}.$$
Indeed, $\grond_d$ converges, because we can write $\displaystyle\log \grond_d=\sum_{p>d}\left(\log\left(1-\frac{d}{p}\right)-d\log \left(1-\frac{1}{p}\right)\right)$. As $\log(1-x)=-x+O(x^2)$, we get that $\displaystyle \log\left(1-\frac{d}{p}\right)-d\log \left(1-\frac{1}{p}\right)=O\left(\frac{1}{p^2}\right)$, so the fact that $\displaystyle \sum_{p}\frac{1}{p^2}<\infty$ finishes it. With this, 

\begin{align*}
    \prod_{d<p\le x}\left(1-\frac{d}{p}\right)&=\prod_{p\le d}\left(1-\frac{1}{p}\right)^{-d}\cdot \prod_{p\le x}\left(1-\frac{1}{p}\right)^d\cdot \prod_{d<p\le x}\left(1-\frac{d}{p}\right)\left(1-\frac{1}{p}\right)^{-d}\sim\\
    &\sim \prod_{p\le d}\left(1-\frac{1}{p}\right)^{-d}\cdot\frac{e^{-d\gamma}}{(\log x)^d}\cdot \grond_d.
\end{align*}

Let us continue with some notes on Galois theory. Consider an irreducible polynomial $f$ with integer coefficients, with splitting field $K$ and Galois group $G$; also, let $n=\deg f$ be the degree of $f$. We view $G$ in the usual manner, as a group of permutations on the roots of $f$, embedded in $S_n$. 

The following two results establish a strong connection between the cycle decomposition of permutations in $G$ and the way $f$ decomposes modulo primes $p$. 

\begin{theorem}
[Dedekind]
(see e.g. \cite{reducible mod all primes} or theorem 8.23 from \cite{milne})
Consider a prime $p$ that does not divide the discriminant of $K$. If $f$ decomposes in $\ZZ_p[X]$ as $\displaystyle f=\prod_{i=1}^tf_i$, then $G$ contains at least one permutation which is the product of $t$ disjoint cycles of lengths $n_1,\dots,n_t$, where $n_i$ is the degree of $f_i$.
\end{theorem}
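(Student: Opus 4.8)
The plan is to realise the required permutation concretely as a Frobenius automorphism attached to $p$. First I would pass from the polynomial to the ring of integers $\mathcal{O}_K$ of the splitting field $K$ and fix a prime ideal $\mathfrak{P}$ of $\mathcal{O}_K$ lying over $p$. The hypothesis that $p$ does not divide the discriminant guarantees that $p$ is unramified in $K$, so the decomposition group $D_{\mathfrak{P}} = \{\sigma \in G : \sigma(\mathfrak{P}) = \mathfrak{P}\}$ maps isomorphically onto the Galois group of the residue extension $\mathcal{O}_K/\mathfrak{P}$ over $\FF_p$. That residue Galois group is cyclic, generated by the $p$-power map, so there is a distinguished element $\sigma \in D_{\mathfrak{P}} \subseteq G$, the Frobenius, characterised by the congruence $\sigma(x) \equiv x^p \pmod{\mathfrak{P}}$ for every $x \in \mathcal{O}_K$. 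This $\sigma$ will be the permutation we are after.

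The second step is to track how $\sigma$ acts on the roots. Writing $\alpha_1, \dots, \alpha_n \in \mathcal{O}_K$ for the roots of $f$, I would reduce modulo $\mathfrak{P}$ to obtain elements $\bar\alpha_1, \dots, \bar\alpha_n$ of the residue field, which are precisely the roots of the reduction $\bar f \in \FF_p[X]$. Here the discriminant hypothesis is used crucially a second time: it forces $\bar f$ to be separable, so the $\bar\alpha_i$ are pairwise distinct and reduction is a genuine bijection from the roots of $f$ onto the roots of $\bar f$. Consequently the permutation $\sigma$ induces on $\{\alpha_1, \dots, \alpha_n\}$ is carried, under this bijection, to the action of the $p$-power map $x \mapsto x^p$ on $\{\bar\alpha_1, \dots, \bar\alpha_n\}$.

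It then remains only to read off the cycle structure of the $p$-power map from the factorisation $\bar f = \prod_{i=1}^t \bar f_i$ into irreducibles over $\FF_p$. I would invoke the standard fact about finite fields that the roots of an irreducible factor of degree $n_i$ form a single orbit of size $n_i$ under $x \mapsto x^p$, being exactly the Frobenius conjugates $\beta, \beta^p, \dots, \beta^{p^{n_i-1}}$ of any one root $\beta$. Hence the $p$-power map partitions the roots of $\bar f$ into $t$ orbits of sizes $n_1, \dots, n_t$, and transporting this partition back through the bijection shows that $\sigma$, viewed in $G \subseteq S_n$, is a product of $t$ disjoint cycles of those very lengths, as claimed.

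The main obstacle is the compatibility asserted in the middle step: one must verify rigorously that reduction modulo $\mathfrak{P}$ intertwines the characteristic-zero Galois action of $\sigma$ with the arithmetic Frobenius on the residue field, and that separability of $\bar f$ upgrades the reduction of roots from a mere map into a bijection that neither collapses nor merges roots. Once the decomposition group has been identified with the residue Galois group and $\bar f$ is known to be separable, the cycle-type computation is a routine consequence of the structure of finite fields.
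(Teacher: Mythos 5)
You should first know that the paper does not actually prove this theorem: it is quoted as a known result, with pointers to Brandl's Monthly article and to Theorem 8.23 of Milne's notes. So there is no internal proof to compare against; your argument is, in outline, exactly the standard proof found in those references --- pass to the splitting field, use unramifiedness to identify the decomposition group of a prime $\mathfrak{P} \mid p$ with the cyclic residue Galois group, take the Frobenius element $\sigma$, and read off its cycle type from the factorisation of $\bar f$ over $\FF_p$. That architecture is correct.

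There is, however, one genuine gap, and it sits precisely at the step you yourself flag as the crux: the claim that ``the discriminant hypothesis forces $\bar f$ to be separable.'' The paper's hypothesis is that $p$ does not divide the discriminant of the \emph{field} $K$, and that hypothesis only buys you unramifiedness; it does \emph{not} prevent distinct roots of $f$ from colliding modulo $\mathfrak{P}$. A minimal example: $f = X^2 - 5$, $p = 2$. Here $\operatorname{disc}\QQ(\sqrt{5}) = 5$, so $2$ is unramified in $K$, yet $f \equiv (X+1)^2 \pmod 2$; the reduction map merges the two roots and your bijection breaks down. What your proof actually needs is $p \nmid \operatorname{disc}(f)$, the \emph{polynomial} discriminant: then $\operatorname{disc}(\bar f) = \overline{\operatorname{disc}(f)} \neq 0$ in $\FF_p$ gives separability, and since every prime ramifying in the splitting field divides $\operatorname{disc}(f)$, this single hypothesis also delivers the unramifiedness used in your first step. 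This is exactly the hypothesis in Milne's Theorem 8.23, and the distinction is not pedantic: under the field-discriminant hypothesis alone the statement itself can fail, because when $p$ divides the index of $\ZZ[\alpha]$ in the ring of integers the degree type of $\bar f$ need not be a cycle type of $G$ at all (for instance, a suitable generator $\alpha$ of $\QQ(\zeta_5)$, whose Galois group is cyclic of order $4$, can be arranged so that at the unramified prime $p = 19$ one gets $\bar f = X^2 q(X)$ with $q$ an irreducible quadratic --- degree type $(1,1,2)$, which is not a cycle type of $\ZZ/4\ZZ$). So the repair is not to argue harder but to state the hypothesis as $p \nmid \operatorname{disc}(f)$; with that reading, your proof is complete and correct.
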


\begin{theorem}
[Frobenius density theorem]
(see e.g. \cite{reducible mod all primes} or \cite{lenstra&stevenhagen}, p. 11)
If $G$ contains a permutation $\sigma$ which is the product of disjoint cycles of length $n_1,\dots,n_t$, then there exists an infinite set $P_\sigma$ of primes such that for any $p\in P_\sigma$, the decomposition of $f$ modulo $p$ has the following structure: $\displaystyle f=\prod_{i=1}^tf_i$, where $f_i$ is a polynomial in $\ZZ_p[X]$ of degree $n_i$, for all $i\in \{1,2,\dots,t\}$.

Moreover, the set $P_\sigma$ has natural density equal to $1/\abs{G}$ times the number of permutations $\sigma$ of $G$ with cycle decomposition type $(n_1,\dots,n_t)$. 
\end{theorem}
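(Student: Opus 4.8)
The plan is to split the statement into a purely algebraic dictionary and one deep analytic input. First I would pass to the splitting field: let $K$ be the splitting field of $f$ over $\QQ$, with ring of integers $\mathcal{O}_K$, so that $G=\mathrm{Gal}(K/\QQ)$ acts faithfully on the $n$ roots of $f$ and is thereby realised inside $S_n$. To every prime $p$ unramified in $K$ (all but the finitely many dividing $\mathrm{disc}\,K$, which is the hypothesis already used for Dedekind's theorem) I would attach its \emph{Frobenius class}: picking a prime $\mathfrak{p}\mid p$ of $\mathcal{O}_K$, the decomposition group is cyclic and carries a distinguished generator $\mathrm{Frob}_\mathfrak{p}$, characterised by $\mathrm{Frob}_\mathfrak{p}(x)\equiv x^{p}\pmod{\mathfrak{p}}$ for all $x\in\mathcal{O}_K$. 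Replacing $\mathfrak{p}$ by $\tau\mathfrak{p}$ conjugates $\mathrm{Frob}_\mathfrak{p}$ by $\tau$, and $G$ permutes the primes above $p$ transitively, so these elements form a single well-defined conjugacy class $\mathrm{Frob}_p\subseteq G$.

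The first genuine step is the dictionary relating $\mathrm{Frob}_p$ to the factorisation of $\bar f:=f\bmod p$. I would prove that the cycle type of $\mathrm{Frob}_p$ as a permutation of the roots equals the decomposition type $(n_1,\dots,n_t)$ of $\bar f$. Because $p$ is unramified, $\bar f$ is separable, and reduction modulo $\mathfrak{p}$ is a bijection from the $n$ roots of $f$ in $\mathcal{O}_K$ onto the $n$ roots of $\bar f$ in $\overline{\FF_p}$; under this bijection $\mathrm{Frob}_\mathfrak{p}$ becomes the $p$-power map $x\mapsto x^{p}$, since $\mathrm{Frob}_\mathfrak{p}(\alpha)\equiv\alpha^{p}\pmod{\mathfrak{p}}$. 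The orbits of the $p$-power map on the roots of $\bar f$ are precisely the root sets of its irreducible factors, an orbit of size $d$ matching a factor of degree $d$; transporting back, the cycle lengths of $\mathrm{Frob}_p$ are exactly $(n_1,\dots,n_t)$. This recovers Dedekind's theorem and sharpens it to the full cycle type, and it shows that ``$\bar f$ has decomposition type $(n_1,\dots,n_t)$'' and ``$\mathrm{Frob}_p$ has cycle type $(n_1,\dots,n_t)$'' describe the same set of primes, which I take to be $P_\sigma$.

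It then suffices to compute the density of $\{p:\mathrm{Frob}_p\text{ has cycle type }(n_1,\dots,n_t)\}$. Let $S\subseteq G$ be the set of elements of that cycle type. Since cycle type is invariant under $S_n$-conjugacy, $S$ is a union of $G$-conjugacy classes. Granting that the density of primes with $\mathrm{Frob}_p$ equal to a fixed class $C$ is $|C|/|G|$, summation over the classes contained in $S$ gives density $|S|/|G|$, which is exactly $1/|G|$ times the number of $\sigma\in G$ of cycle type $(n_1,\dots,n_t)$. As $\sigma\in S$ by hypothesis, $S\neq\varnothing$, so this density is positive and $P_\sigma$ is infinite.

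The genuinely hard part is the density input, which I would cite rather than reprove. The cleanest route is Chebotarev's density theorem: the density of $\{p:\mathrm{Frob}_p=C\}$ equals $|C|/|G|$ for each conjugacy class $C$, a statement whose proof rests on the analytic continuation and non-vanishing at $s=1$ of the relevant Artin $L$-functions. In fact the full strength of Chebotarev is not needed: the cycle-type set $S$ is already a union of Frobenius \emph{divisions} — unions of the conjugacy classes of the elements $\{\sigma^{k}:\gcd(k,\mathrm{ord}\,\sigma)=1\}$, whose members all share the cycle type of $\sigma$ because raising to an exponent coprime to the order preserves every cycle length — so the older and weaker density theorem of Frobenius for divisions already delivers the stated count. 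Either way the analytic non-vanishing is the crux, while the algebraic dictionary above is the elementary bridge that converts a density of group-theoretic classes into the stated density of factorisation types.
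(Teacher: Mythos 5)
The paper does not prove this theorem at all: it is quoted as a known result, with the proof deferred to the references (Brandl; Stevenhagen--Lenstra). Your proposal is essentially the standard argument found in those references --- the Frobenius-class dictionary between cycle types and factorisation types, followed by the density theorem for conjugacy classes (Chebotarev) or, as you correctly observe, the weaker and historically prior theorem of Frobenius for divisions, since a set of elements of fixed cycle type is a union of divisions (raising $\sigma$ to a power coprime to its order preserves all cycle lengths). That reduction is the right one and is not circular: the analytic core you cite is the group-theoretic density statement, while the content you supply is the elementary bridge to factorisation types. One caveat worth a sentence in your write-up: the classical division theorem is usually stated with \emph{Dirichlet} density, whereas the statement here asserts \emph{natural} density, so you should cite a natural-density form (which does hold for both Frobenius and Chebotarev).

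There is, however, one step that fails as literally written: you claim that $p$ unramified in $K$ forces $\bar f$ to be separable and reduction to be a bijection on roots. This is false; unramifiedness in $K$ controls $\mathrm{disc}\,K$, not $\mathrm{disc}\,f$, and the two can differ at finitely many primes. For example, take $f=x^2-5$, $K=\QQ(\sqrt 5)$, $p=2$: since $\mathrm{disc}\,K=5$, the prime $2$ is unramified (indeed inert, so $\mathrm{Frob}_2$ has cycle type $(2)$), yet $f\equiv (x+1)^2 \pmod 2$ is inseparable and its factorisation type reads $(1,1)$, so your dictionary breaks at this prime. The repair is cheap and standard: run the dictionary only for the finitely many excluded primes $p\nmid \mathrm{disc}\,f$ (equivalently, demand that $\bar f$ be separable), and note that discarding finitely many primes changes neither the density nor the infinitude of $P_\sigma$. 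With that correction, and with the natural-density form of the class/division density theorem cited, your outline is a complete and correct proof of the stated theorem.
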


We will be especially interested in the latter, as it will allow us to find the exact asymptotics of some products along the way. 

Before finally moving to the main stage, let us recall the following
\begin{proposition}\label{isotropy}
$G$ acts transitively on the set of roots of $f$, since $f$ is irreducible. Therefore, the number of automorphisms of $G$ that fix a given root of $f$ is the same for all roots of $f$. 
\end{proposition}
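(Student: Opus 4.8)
The plan is to treat the two assertions of the proposition in turn, as each is a standard consequence of Galois theory combined with the orbit--stabilizer theorem. First I would establish transitivity. Take any two roots $\alpha$ and $\beta$ of $f$. Since $f$ is irreducible over $\QQ$, it is a scalar multiple of the minimal polynomial of each of $\alpha$ and $\beta$, so there is a field isomorphism $\QQ(\alpha)\to\QQ(\beta)$ that fixes $\QQ$ pointwise and sends $\alpha\mapsto\beta$. Because $K$ is the splitting field of $f$ over both $\QQ(\alpha)$ and $\QQ(\beta)$, I would invoke the isomorphism extension theorem to lift this map to an automorphism $\sigma\in G$ with $\sigma(\alpha)=\beta$. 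As $\alpha,\beta$ were arbitrary, this shows $G$ acts transitively on the roots of $f$.

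For the counting statement I would introduce, for each root $\alpha$, its isotropy subgroup $\mathrm{Stab}(\alpha)=\{\sigma\in G:\sigma(\alpha)=\alpha\}$, which is exactly the set of elements of $G$ fixing $\alpha$. The key observation is that stabilizers of points in the same orbit are conjugate: for any $\tau\in G$ one checks directly that $\mathrm{Stab}(\tau\alpha)=\tau\,\mathrm{Stab}(\alpha)\,\tau^{-1}$. Since conjugation by $\tau$ is a bijection of $G$, this yields $\abs{\mathrm{Stab}(\tau\alpha)}=\abs{\mathrm{Stab}(\alpha)}$, and transitivity guarantees that every root arises as $\tau\alpha$ for a suitable $\tau$; hence all stabilizers have equal cardinality. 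Equivalently, and more cleanly, I would apply the orbit--stabilizer theorem $\abs{G}=\abs{\mathrm{Orb}(\alpha)}\cdot\abs{\mathrm{Stab}(\alpha)}$: transitivity forces $\mathrm{Orb}(\alpha)$ to be the full set of $n$ roots, so $\abs{\mathrm{Stab}(\alpha)}=\abs{G}/n$ independently of the chosen root.

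There is no serious obstacle here; the argument is entirely formal. The only step requiring a little care is the extension of the $\QQ$-isomorphism $\QQ(\alpha)\to\QQ(\beta)$ to an automorphism of $K$, which is precisely the isomorphism extension theorem applied to the splitting field $K$ over each of the two simple subextensions. Everything else reduces to the elementary group-theoretic fact that conjugate subgroups have the same order.
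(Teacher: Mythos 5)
Your proof is correct and follows exactly the reasoning the paper leaves implicit: irreducibility gives transitivity via the isomorphism extension theorem applied to $\QQ(\alpha)\to\QQ(\beta)$ inside the splitting field, and transitivity gives equal stabilizer sizes via conjugacy of stabilizers (equivalently, orbit--stabilizer, so each stabilizer has order $\abs{G}/n$). The paper states the proposition without proof as a standard fact, and your argument is the standard one that justifies it.
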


\section{Main Result}

\begin{theorem}
Consider a nonconstant irreducible polynomial $P\in \ZZ[X]$ of degree $d=\deg P$ and define $\delta=\displaystyle\gcd_{k\ge 0}P(k)$. Let $G\injto S_d$ be the Galois group of $P$ over $\QQ$ and also let $q$ be the proportion of permutations in $G$ that fix a given root of $P$. Then there exists a constant $c>0$, such that for all $\eps>0$ and all $n\in \ZZ_{\ge 2}$ with $\gcd(n,\delta)=1$ that are greater than some $n(\eps)$, among
$$P(0),P(1),P(2),\dots,P(n-1)$$
there are at least $cn\cdot\left(\log \log n\right)^{-qd-\eps}$ numbers relatively prime to $n$.
\end{theorem}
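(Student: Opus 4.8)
The plan is to start from an exact multiplicative formula for the count. Writing $\rho(p)$ for the number of residues $k\in\{0,1,\dots,p-1\}$ with $p\mid P(k)$ (the number of roots of $P$ modulo $p$), I would first observe that $\gcd(P(k),n)=1$ holds precisely when $P(k)\not\equiv 0\pmod p$ for every prime $p\mid n$, a condition that depends only on $k$ modulo each such $p$. Since $\{0,1,\dots,n-1\}$ is a complete residue system modulo $n$ and $n$ is divisible by each $p\mid n$, the Chinese Remainder Theorem decouples the conditions at distinct primes, giving
$$\varphi_P(n)\;=\;n\prod_{p\mid n}\left(1-\frac{\rho(p)}{p}\right).$$
The hypothesis $\gcd(n,\delta)=1$ guarantees that no $p\mid n$ divides $\delta=\gcd_{k}P(k)$, so $\rho(p)<p$ and every factor is strictly positive.

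Next I would split the product at $p=\log n$, exactly as in the classical bound for $\varphi$. For the large primes I use $1-\rho(p)/p\ge 1-d/p$ together with the fact that $n$ has at most $\log n/\log\log n$ prime factors exceeding $\log n$; this forces $\prod_{p\mid n,\,p>\log n}(1-\rho(p)/p)\ge(1-d/\log n)^{\log n/\log\log n}\to 1$. For the small primes, discarding the constraint $p\mid n$ only appends factors lying in $(0,1]$, so $\prod_{p\mid n,\,p\le\log n}(1-\rho(p)/p)\ge\prod_{p\le\log n,\,p\nmid\delta}(1-\rho(p)/p)$, which reduces the whole problem to the asymptotics of $\prod_{p\le x}(1-\rho(p)/p)$ evaluated at $x=\log n$.

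The heart of the argument is to show $\prod_{p\le x}(1-\rho(p)/p)\ge c(\log x)^{-qd-\eps}$. Taking logarithms and using $\log(1-\rho(p)/p)=-\rho(p)/p+O(1/p^2)$ uniformly (as $\rho(p)\le d$), I reduce this to estimating $\sum_{p\le x}\rho(p)/p$, the genuinely arithmetic quantity. By Dedekind's theorem, for all but finitely many $p$ the value $\rho(p)$ equals the number of fixed points of the Frobenius permutation at $p$, and by the Frobenius density theorem the set $\{p:\rho(p)=k\}$ has natural density $D_k=\#\{g\in G:\operatorname{Fix}(g)=k\}/\abs G$. Swapping the sum over roots with the sum over $G$ and invoking \Cref{isotropy} gives $\sum_k kD_k=qd$, the average number of fixed points. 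A partial-summation (Mertens-type) argument then upgrades each natural-density statement to $\sum_{p\le x,\,\rho(p)=k}1/p=D_k\log\log x+o(\log\log x)$, whence $\sum_{p\le x}\rho(p)/p=qd\log\log x+o(\log\log x)$.

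I expect this last upgrade to be the main obstacle. The Frobenius density theorem delivers only natural density, and converting this into the weighted sum $\sum\rho(p)/p$ by partial summation introduces an error I can control only as $o(\log\log x)$ rather than $O(1)$; this is precisely the source of the $\eps$ in the exponent, and a sharper analytic input (the behaviour of the Dedekind zeta function of $\QQ$ adjoined a root of $P$) would replace $o(\log\log x)$ by $O(1)$ and remove it. Granting the $o$-estimate, exponentiation yields $\prod_{p\le x}(1-\rho(p)/p)\ge c(\log x)^{-qd-\eps}$ for $x\ge x(\eps)$, where $c$ depends only on $P$ through the convergent error constants and not on $\eps$. Substituting $x=\log n$ and multiplying by the large-prime factor, which exceeds $1/2$ for large $n$, gives $\varphi_P(n)\ge\tfrac12 c\,n(\log\log n)^{-qd-\eps}$, with the entire $\eps$-dependence confined to the threshold $n(\eps)$.
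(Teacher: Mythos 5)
Your proposal is correct and takes essentially the same approach as the paper: the same exact product formula $\varphi_P(n)=n\prod_{p\mid n}\bigl(1-\rho(p)/p\bigr)$, the same split at $p=\log n$ with the identical treatment of the large primes, and the same use of Dedekind/Frobenius together with Proposition~\ref{isotropy} to identify the exponent as $qd$, with the $\eps$ arising in both arguments from the same source, namely upgrading the natural density of $P_k=\{p:\rho(p)=k\}$ to a bound on $\sum_{p\in P_k,\,p\le x}1/p$. The only differences are presentational: you prove the product formula via the Chinese Remainder Theorem where the paper cites it, and you estimate the logarithm of $\prod_{p\le x}\bigl(1-\rho(p)/p\bigr)$ directly where the paper does the equivalent bookkeeping through Mertens' third theorem and the convergent correction products $\grond_{d,k}$.
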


\begin{proof}
[Proof of Theorem 2.1]
Denote by $\varphi _P(n)$ as the number of integers among $P(0)$, $P(1)$, $P(2)$, $\dots$, $P(n-1)$ which are relatively prime to $n$, for all $n\in \ZZ_{\ge 2}$. Also, define by $f(n)$ the number of multiples of $n$ among $P(0)$, $P(1)$, $P(2)$, $\dots$, $P(n-1)$, for all $n\in \ZZ_{\ge 2}$. We have the following
\begin{lemma*}
With the above mentioned notations, 
$$\varphi _P(n)=
n\prod _{\substack{
p|n
}}
\left (1-\frac{f(p)}{p}\right ).
$$
\end{lemma*}

For a proof of the lemma, see e.g. \cite{NTconcepts&problems}, pp. 404-405. So we have to work on the product 
$$\frac{\varphi_P(n)}{n}=\prod_{\substack{p\mid n}}\left(1-\frac{f(p)}{p}\right).$$

Let $d=\deg P$ and consider from now on only positive integers $n>e^{d+1}$. Also, fix some small $\eps>0$. Then we can split our product into $3$ parts:
\begin{align*}
    \frac{\varphi_P(n)}{n}&=\prod_{\substack{p\mid n\\p\le d}}\left(1-\frac{f(p)}{p}\right)\cdot \prod_{\substack{p\mid n\\d+1\le p<\log n}}\left(1-\frac{f(p)}{p}\right)\cdot\prod_{\substack{p\mid n\\p\ge \log n}}\left(1-\frac{f(p)}{p}\right)=\\
    &=\Pi_1\Pi_2\Pi_3.
\end{align*}

For $p\le d$ that divides $n$, since $\gcd(\delta,n)=1$, it follows that not all of the numbers $P(i)$, $i=\ol{0,p-1}$, are divisible by $p$, hence $f(p)\le p-1$. Consequently, $\displaystyle \Pi_1\ge \prod_{p\le d}\frac{1}{p}$.

If $p\ge d+1$, then the equation $P(k)=0$ cannot have more than $\deg P=d$ solutions modulo $p$. Consequently, $f(p)\le d$. 

In the prime decomposition of $n$, if $k$ is the number of distinct prime factors exceeding $\log n$, then $n\ge (\log n)^k$, so $k\le \displaystyle \frac{\log n}{\log \log n}$. It follows that $\Pi_3\ge \displaystyle \prod_{\substack{p\mid n\\p\ge \log n}}\left(1-\frac{d}{p}\right)\ge \left(1-\frac{d}{\log n}\right)^{\frac{\log n}{\log \log n}}\sim e^{-\frac{d}{\log \log n}}\ge 0.99$ for all sufficiently large positive integers $n$. 

For $\Pi_2$, we need to be a bit more careful when dealing with the $f(p)$'s. If we define $g(p)$ the number of roots $P$ has modulo $p$, we obviously have $f(p)\le g(p)$, as $f(p)$ counts only the number of distinct roots modulo $p$. For each $k\in\{0,1,\dots,d\}$, define $P_k=g^{-1}(k)$ to be the set of those primes $p>d$ such that $g(p)=k$. Then, by writing $x=\log n$, we more generally look at the quantity
$$\Pi_2\ge \Pi_2'=\prod_{k=0}^d\prod_{\substack{d<p\le x\\p\in P_k}}\left(1-\frac{k}{p}\right).$$

Now, if we define $\displaystyle \grond_{d,k}=\prod_{\substack{p>d\\p\in P_k}}\left(1-\frac{k}{p}\right)\left(1-\frac{1}{p}\right)^{-k}$, in a similar manner to establishing the convergence of $\grond_d$, we get that $\grond_{d,k}$ is convergent too, for all $k\in\{0,1,\dots,d\}$. Consequently,
\begin{align*}
    \Pi_2'&=\prod_{k=0}^d\prod_{\substack{d<p\le x\\p\in P_k}}\left(1-\frac{k}{p}\right)=\\
    &=\prod_{k=0}^d\left(\prod_{\substack{d<p\le x\\p\in P_k}}\left(1-\frac{k}{p}\right)\left(1-\frac{1}{p}\right)^{-k}\right)\cdot \prod_{k=0}^d\left(\prod_{\substack{d<p\le x\\p\in P_k}}\left(1-\frac{1}{p}\right)\right)^k\sim \\
    &\sim \prod_{k=0}^d\left(\grond_{d,k}\cdot\prod_{p\le d}\left(1-\frac{1}{p}\right)^{-k}\right)\cdot \prod_{k=0}^d\left(\prod_{\substack{p\le x\\p\in P_k}}\left(1-\frac 1p\right)\right)^k.
\end{align*}

Therefore, we are interested in the asymptotics of $\displaystyle \prod_{\substack{p\le x\\p\in P_k}}\left(1-\frac 1p\right)$. For $p\in P_k$, $g(p)=k$ means that the polynomial $P$ has exactly $k$ roots modulo $p$, counted with multiplicity. By Frobenius' Density Theorem, we know that $P_k$ has natural density $\alpha_k=\beta_k/\abs{G}$, where $\beta_k$ counts the number of permutations in $G$ that have exactly $k$ fixed points. This implies that 
$$\lim_{x\to \infty}\frac{\sum_{\substack{p\in P_k,~p\le x}}1/p}{\sum_{p\le x}1/p}=\alpha_k.$$
So, for all sufficiently large $x$, we have that $\displaystyle\sum_{p\le x,~p\in P_k}1/p<(\alpha_k+\eps/kd)\log \log x$. It follows that $\displaystyle \sum_{\substack{p\le x\\p\in P_k}}\log\left(1-\frac{1}{p}\right)=-\sum_{\substack{p\le x\\p\in P_k}}\frac{1}{p}+O(1)\ge -(\alpha_k+\eps/kd)\log\log x+O(1)$, so $\displaystyle \prod_{\substack{p\le x\\p\in P_k}}\left(1-\frac{1}{p}\right)\ge c_k\cdot (\log x)^{-\alpha_k-\eps/kd}$, where $c_k$ is a constant that does not depend on $\eps$. Consequently, 
$$\Pi_2'\sim C\cdot (\log x)^{-\alpha_1-2\alpha_2-\dots-d\alpha_d-\eps},$$
where $C$ is a constant independent of $\eps$, so we are left with taking one more careful look at the quantity $\abs{G}(\alpha_1+2\alpha_2+\dots+d\alpha_d)=\beta_1+2\beta_2+\dots+d\beta_d$. Now, this sum counts the number of pairs $(\sigma,y)$, where $\sigma$ is an element of the Galois group $G$ and $y$ is a fixed point of $\sigma$, by first considering permutations of $G$. If we consider first the roots $y$ of $P$, we see that we need to count how many automorphisms of $G$ fix $y$. As established in \ref{isotropy}, this number is the same for all roots. Therefore, the exponent we just obtained can be put in the shape $-qd-\eps$. 

Finally, putting $\Pi_1$, $\Pi_2$ and $\Pi_3$ together, we obtain that $\varphi_P(n)$ is bounded below by $cn\cdot (\log \log n)^{-qd-\eps}$, for all $n$ that are greater than some $n(\eps)$. Moreover, we see that the constant $\displaystyle c=C\cdot 0.99\cdot \prod_{p\le d}\frac{1}{p}$ depends on $d=\deg P$ only, and it has nothing to do with $\eps$.
\end{proof}

\section{Final Remarks}

The extra $-\eps$ that appears at the exponent of the lower bound comes from the fact that we do not know how to express the sum $\displaystyle \sum_{\substack{p\in P_k\\p\le x}}\frac{1}{p}$ clean in terms of $\log \log x$. Dirichlet density (which is implied by natural density) only goes this far. 

A way to avoid $-\eps$ is by directly using the bound $f(p)\le d$ for all large primes $p$. This way, the final lower bound would even have an explicit constant, however the exponent would only be $-d$, which is weaker than the result here. 

A final question might arise, and that is: what if $q=1$? In that case, the lower bound here would be even smaller than the safer one, with $-d$ as an exponent. 

If $q=1$ i.e. all automorphisms of $G$ fix all roots of $f$, then $G$ consists only of the identity automorphism, meaning $K=\QQ$. In that case, as $P$ is irreducible, we would obtain that $P$ is of degree $1$. However, this situation is very similar to the analysis of $\varphi(n)$ and consequently gives the same lower bound as the one for $\varphi(n)$, since all the trouble regarding the degree $d$ of $P$ is gone. 

So the result presented here is especially relevant for $\deg P\ge 2$ as, again, $\deg P=1$ could instead be put in the same situation as the classic $\varphi$.

\end{document}